\theoremstyle{plain}
\newtheorem{lemma}{Lemma}
\newtheorem{theorem}[lemma]{Theorem}
\theoremstyle{definition}
\newtheorem{remark}[lemma]{Remark}
\numberwithin{equation}{section}
\newcommand{\be}{\begin{equation}}
\newcommand{\ee}{\end{equation}}
\DeclareMathOperator{\A}{\mathcal{A}}
\DeclareMathOperator{\V}{\mathcal{V}}
\newcommand{\de}{\partial}
\newcommand{\Ric}{{\rm Ric}}
\newcommand{\K}{{\rm K}}
\newcommand{\cM}{{\mathcal M}}
\newcommand{\Le}{{\mathcal{L}}}
\newcommand{\RP}{\mathbb{RP}}
\newcommand{\bS}{{\mathbb S}}
\newcommand{\R}{\mathbb{R}}
\newcommand{\N}{\mathbb{N}}
\newcommand{\pa}{\partial}
\newcommand{\ve}{\varepsilon}
\newcommand{\cI}{\mathcal{I}}
\def\Om{\Omega}
\def\l{\lambda}
\begin{document}

\title[Rigidity for critical points in the L\'evy-Gromov inequality] {Rigidity for critical points in the L\'evy-Gromov inequality}
\author{Fabio Cavalletti}\thanks{F. Cavalletti:Universit\'a degli Studi di Pavia, Dipartimento di Matematica, email: fabio.cavalletti@unipv.it}
\author{Francesco Maggi}\thanks{F. Maggi: International Centre for Theoretical Physics, Trieste (on leave from UT Austin), email: fmaggi@ictp.it}
\author {Andrea Mondino} \thanks{A. Mondino: University of Warwick,  email: A.Mondino@warwick.ac.uk}
%

\keywords{Isoperimetric problem, L\'evy-Gromov inequality, Ricci curvature}

\bibliographystyle{plain}

\begin{abstract}
The L\'evy-Gromov inequality states that round spheres have the least isoperimetric profile (normalized by total volume) among Riemannian manifolds with a fixed positive lower bound on the Ricci tensor. In this note we study critical metrics corresponding to the L\'evy-Gromov inequality and prove that, in two-dimensions, this criticality condition is quite rigid, as it characterizes round spheres and projective planes.
\end{abstract}

\maketitle

The {\it isoperimetric problem} in a closed (i.e. compact without boundary) $n$-dimensional Riemannian manifold $(M,g)$ consists in minimizing the area $\A_g(\de\Om)$ of the boundary $\de\Om$ of a region $\Om\subset M$ with given $n$-dimensional volume $\V_{g}(\Omega)$. Minimizers are called {\it isoperimetric regions}, and the minimum value function is called the {\it isoperimetric profile} of $(M,g)$:
\be\label{eq:defIg}
\cI_{(M,g)}(v)=\inf\left\{\A_{g}(\de\Omega)\,:\, \Omega\subset M, \ \frac{\V_{g}(\Omega)}{\V_{g}(M)}=v \right\}, \qquad v \in (0,1)\,.
\ee
A full solution to the isoperimetric problem requires the explicit characterization of its minimizers, and it is thus possible only in highly symmetric ambient spaces. In the case of generic ambient spaces, the best expectation is to obtain some indirect information, for example in the form of explicit bounds on the isoperimetric profile.

This is the spirit of the celebrated {\it L\'evy--Gromov inequality} \cite[Appendix C]{Gro}: if $\Ric_{g}\geq K\,g$ for some constant $K>0$, then
\begin{equation}
  \label{levygromov inq}
  \frac{\cI_{(M,g)}(v)}{\V_g(M)}\ge\frac{\cI_{(S,g_S)}(v)}{\V_g(S)}\qquad\forall v\in(0,1)\,,
\end{equation}
where $(S,g_S)$ is the standard $n$-dimensional sphere with Ricci curvature equal to $K$ (see also \cite{BBG},    \cite{Mil} for the generalization to  the case $K\leq0$ and  diameter bounded above, and \cite{CM} for the extension to non-smooth spaces).  Having in mind the relation between the Euclidean isoperimetric theorem (balls are the only volume-constrained minimizers of perimeter) and Alexandrov's rigidity theorem (balls are the only volume-constrained critical points of perimeter), in this note we ask what can be said about critical points in the variational problem corresponding to the L\'evy-Gromov inequality, and, at least in dimension two, we prove a full rigidity theorem.

Our terminology will be as follows. The {\it L\'evy-Gromov functional} on a Riemannian manifold $(M,g)$ at volume fraction $v\in(0,1)$ is defined as
\begin{equation}\label{eq:defLGFunct}
\Le_v(M,g)=\frac{\cI_{(M,g)}(v)}{\V_g(M)}\,.
\end{equation}
We denote with $\cM_{M}$ the space of Riemannian metrics over $M$ and, given $K\in\R$, we consider the family $\cM_{M,K}$ metrics on $M$ with Ricci tensor bounded below by $K$, and the family $\cM_{M,K,g}$ of metrics in $\cM_{M,K}$ that are conformal to a given metric $g$, i.e. we set
\begin{eqnarray}
\label{eq:defcMK}
\cM_{M,K}&=&\big\{ g \in \cM_{M}\,:\, \Ric_{g}\geq K \, g\big\}\,,
\\ \label{eq:defcM}
\cM_{M,K,g}&=&\big\{\hat{g}_{ij}:=e^{2u}g_{ij}\,:\, u\in C^2(M), \, \Ric_{\hat{g}}\geq K \hat{g}\big\}\,.
\end{eqnarray}
Endowing $\cM_{M}$ with the $C^2$-topology, we notice that both $\cM_{M,K}$ and $\cM_{M,K,g}$ have non-empty boundary. A natural definition of critical point associated to the Levy-Gromov inequality is then the following: we say that $g$ is a \emph{critical isoperimetric metric} (with constant $K$) if $g\in \cM_{M,K}$ and the following holds:
\begin{enumerate}
\item[(i)] if $g$ is an interior point of  $\cM_{M,K}$, then
\[
\frac{d}{dt}\Big|_{t=0}  \Le_{v}(M,g(t))=0
\]
for every $v\in(0,1)$ and $g(t)\in C^{1}((-1,1);\cM_{M,K})$ with $g(0)=g$;
\item[(ii)] if $g$ is a boundary point of $\cM_{M,K}$, then
\[
\frac{d}{dt}\Big|_{t=0^+}  \Le_{v}(M,g(t))\geq 0
\]
for every $v\in(0,1)$ and $g(t)\in C^{1}([0,1);\cM_{M,K})$ with $g(0)=g$.
\end{enumerate}
When, in the above definition, $\cM_{M,K,g}$ is considered in place of $\cM_{M,K}$, we say that $g$ is a \emph{conformally-critical isoperimetric metric}. The question we pose is what degree of rigidity can be expected for conformally critical isoperimetric metrics.

A first remark is that no metric can be conformally-critical with constant $K\le 0$. Indeed, let us recall that if $\hat{g}=e^{2u} g$ for some $u\in C^2(M)$, then
\begin{equation}
\label{eq:confRic}
\Ric_{\hat{g}}=\Ric_{g}-(\Delta u)\, g-(n-2) {\rm Hess}_g\, u +(n-2) (d u \otimes du-|\nabla u|^{2} g),
\end{equation}
where  ${\rm Hess}_g\, u$ denotes the Hessian of $u$ (with respect to the Levi-Civita connection of $g$) and $\Delta_g u=g^{ij} ({\rm Hess}_g\,u)_{ij}$ is the Laplace-Beltrami operator with respect to $g$ applied to $u$. In particular, if we pick $u=\log\l$ for some $\l>0$ and $\Ric_g\ge K\,g$, then $\Ric_{\hat{g}}=\Ric_{g}\ge K\,g=K\,\l^{-2}\,\hat g$. Given that $K\le 0$, we have $\Ric_{\hat g}\ge K\,\hat g$ for every $\l^2\ge 1$. Since $\V_{\hat{g}}(\Om)=\l^n\,V_g(\Om)$ and $\A_{\hat{g}}(\pa\Om)=\l^{n-1}\,\A_g(\pa\Om)$ for every $\Om\subset M$, we also have
\[
\Le_v(M,\hat{g})=\frac{\Le_v(M,g)}\l
\]
and thus, setting $g(t)=(1+t)^2\,g$ for $t>0$, we find $(d\Le_v(M,g(t))/dt)|_{t=0^+}=-\Le_v(M,g)<0$ for every $v\in(0,1)$.

From now on we shall thus take $K>0$. In dimension $n=2$ (where one simply has $\Ric_{g}=\K_{g} \, g$, $\K_{g}$ denoting the Gauss curvature of $g$) it turns out that the apparently very weak notion of conformally-critical isoperimetric metric implies the maximal degree of rigidity one could expect:

\begin{theorem}[Rigidity of conformally-critical isoperimetric metrics in dimension 2]\label{thm:dim2}
If $(M,g)$ is a two-dimensional closed Riemannian manifold and $K>0$, then $g$ is a conformally-critical isoperimetric metric with constant $K$ if and only if $(M,g)$ is either a sphere or the real projective plane with $K_g=K$.
\end{theorem}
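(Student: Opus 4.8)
The plan is to exploit the conformal criticality condition against a cleverly chosen family of conformal deformations, together with the known equality cases in the Lévy--Gromov inequality. The first observation is that since $K>0$, by Bonnet--Myers $M$ is compact with finite fundamental group, so either $M=\bS^2$ or $M=\RP^2$; in both cases the universal cover is $\bS^2$. I would first reduce to the case $M=\bS^2$ by lifting conformal deformations and noting that isoperimetric profiles and volumes scale compatibly under the double cover. So the heart of the matter is to show: if $g=e^{2u}g_0$ is a conformally-critical isoperimetric metric on $\bS^2$ with constant $K$ (here $g_0$ any fixed reference metric, e.g. the round one), then $g$ is (up to scaling) the round metric with $\K_g\equiv K$, i.e. $K_g=K$.

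The key structural input is the rigidity statement in the Lévy--Gromov inequality itself: equality $\Le_v(M,g)=\Le_v(S,g_S)$ for some (equivalently, every) $v\in(0,1)$ forces $(M,g)$ to be the round sphere of the appropriate radius (this is the classical equality case of Lévy--Gromov, available also via the needle-decomposition approach of \cite{CM}). So it suffices to prove that a conformally-critical metric must be a global minimizer of $\Le_v$ over $\cM_{\bS^2,K,g}$, or at least must satisfy $\K_g\equiv K$. To get there I would compute the first variation of $v\mapsto\Le_v(M,g(t))$ explicitly along conformal paths $g(t)=e^{2tf}g$. Writing $\Gamma_{g,v}$ for an isoperimetric region of volume fraction $v$ with boundary $\gamma=\partial\Gamma_{g,v}$, a standard domain-variation / coarea argument (the isoperimetric profile is semiconcave and its a.e.\ derivative is controlled by the constant mean curvature of $\gamma$) gives a formula of the form
\[
\frac{d}{dt}\Big|_{t=0}\Le_v(M,g(t))
=\frac{1}{\V_g(M)}\left(\int_\gamma f\,d\haus^1_g + H\,\frac{d}{dt}\Big|_{t=0}\V_{g(t)}(\Gamma_{g,v}) \right)-\Le_v(M,g)\,\frac{\int_M f\,d\V_g}{\V_g(M)}\,,
\]
where $H$ is the (constant) geodesic curvature of $\gamma$; here I use $\haus^1_g$ for the induced length measure and the term $\frac{d}{dt}\V_{g(t)}(\Gamma_{g,v})=2\int_{\Gamma_{g,v}}f\,d\V_g$ comes from $\sqrt{\det(e^{2tf}g)}=e^{2tf}\sqrt{\det g}$ in dimension $2$. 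The point of choosing conformal variations is precisely that in $2$D a conformal change is the only way to stay inside the manifold of metrics while having full freedom in the ``potential'' $f$, and the curvature constraint $\K_{g(t)}\ge K$ reads, via \eqref{eq:confRic} with $n=2$, simply $\K_g - \Delta_g f \cdot e^{-2tf}\,(\text{lower order}) \ge K e^{2tf}$, i.e.\ at $t=0$ the admissible variations are those $f$ with $\Delta_g f\le \K_g-K$ wherever $\K_g=K$ and arbitrary sign elsewhere.

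From criticality I would then argue as follows. At an interior point of $\cM_{\bS^2,K,g}$ (i.e.\ $\K_g>K$ everywhere) one has $\K_g-K>0$ on a compact manifold, so $\int_M(\K_g-K)\,d\V_g>0$, but Gauss--Bonnet gives $\int_M \K_g\,d\V_g=4\pi$ for $\bS^2$, forcing $K\,\V_g(M)<4\pi$; meanwhile testing criticality against $f\equiv 1$ and $f\equiv-1$ (both admissible since the constraint is open) yields $\frac{d}{dt}\Le_v=0$ with both signs, which after the computation above pins a linear relation that, combined with the Lévy--Gromov inequality $\Le_v(M,g)\ge\Le_v(S,g_S)$ and its rigidity, is contradictory unless $\K_g\equiv K$. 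At a boundary point, $Z:=\{\K_g=K\}$ is nonempty; I would test the inequality in (ii) against variations $f$ supported near generic points, using that admissibility only constrains $\Delta_g f\le \K_g-K$ on $Z$, to show that the geodesic curvature $H$ of the isoperimetric boundary and the ``first eigenvalue along $\gamma$'' must match the round-sphere values, again invoking the sharp Lévy--Gromov profile comparison. Either route forces $\K_g\equiv K$, hence $(M,g)$ round (after rescaling, $K_g=K$), and conversely the round sphere and $\RP^2$ are global minimizers of $\Le_v$ in their conformal class by Lévy--Gromov, hence trivially conformally-critical, closing the equivalence.

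The main obstacle I anticipate is the lack of smoothness and uniqueness of isoperimetric regions: to differentiate $\Le_v(M,g(t))$ in $t$ one must control how $\Gamma_{g(t),v}$ varies, and isoperimetric regions need be neither unique nor, a priori in a general metric, vary smoothly. The standard remedy is to work with the one-sided derivatives guaranteed by the concavity/semiconcavity properties of $v\mapsto \cI_{(M,g)}(v)$ and the fact that at fixed $t$ the boundary $\gamma$ of any isoperimetric region is a smooth closed curve of constant geodesic curvature (in $2$D, automatic regularity), so the relevant upper/lower derivatives of $\Le_v$ along the path are sandwiched by $\int_\gamma f$-type quantities for a minimizing $\gamma$; the criticality inequalities then still deliver the needed pointwise constraints on $\K_g$ after integrating against enough test functions $f$. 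Handling the boundary case (ii) cleanly — i.e.\ showing that the partial curvature constraint on $Z$ is not enough to make the metric critical unless $Z$ is everything — is where the sharp equality analysis of Lévy--Gromov does the real work.
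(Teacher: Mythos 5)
Your proposal contains the right raw ingredients (conformal variations, the constraint $\K_{\hat g}\ge K$ read through \eqref{eq:confRic}, care about non-uniqueness of isoperimetric regions), but the two steps that actually carry the paper's proof are missing, and the places where you lean on ``the sharp equality analysis of L\'evy--Gromov'' cannot do that work. First, criticality does \emph{not} imply equality in \eqref{levygromov inq}, nor global minimality: the round $\RP^2$ is itself (conformally) critical and satisfies $\Le_v(\RP^2,g_0)=\sqrt{2v-v^2}>\sqrt{v-v^2}=\Le_v(\bS^2,\tilde g_0)$, so any route that tries to force the metric into the rigidity case of L\'evy--Gromov is structurally blocked. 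Moreover, your first-variation formula sees $f$ only through its integrals over the isoperimetric boundary and region; the Gauss curvature enters solely through the admissibility constraint, and the way to exploit this is the paper's localization argument, which you gesture at but never carry out: for $x$ on the boundary of an isoperimetric region one chooses $u$ supported in a small ball with $\int_{\de\Om}u<0$, $\int_\Om u=\int_M u=0$, and compensates the volume fraction by a diffeomorphism flow $\Phi^X_s$ (implicit function theorem in $(s,t)$), obtaining a strict first-order decrease of $\Le_v$; if $\K_g>K$ on that ball the perturbation stays in $\cM_{M,K,g}$, contradicting criticality. This yields $\K_g=K$ only on the \emph{isoperimetric sweep}, and the second missing ingredient is how to propagate this to all of $M$: the paper uses the small-volume asymptotics (Morgan--Johnson, Druet, Nardulli, Mondino--Nardulli) that isoperimetric regions with $v_h\to0^+$ concentrate at a maximum point of $\K_g$, giving $K=\max_M\K_g$ and hence, with $\K_g\ge K$, the identity $\K_g\equiv K$. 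Your alternative of ``integrating against enough test functions $f$'' never produces a pointwise statement at points away from isoperimetric boundaries, and your interior/boundary dichotomy only rules out interior critical points (scaling already does that), which is far from $\K_g\equiv K$.

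The converse also has a gap for $\RP^2$: since the round $\RP^2$ is \emph{not} a global minimizer of $\Le_v$ among metrics with curvature $\ge K$, it is not ``trivially conformally-critical by L\'evy--Gromov,'' and the double cover does not make ``isoperimetric profiles scale compatibly'' in general (the lift of an isoperimetric region may be connected, e.g.\ if it contains a non-contractible loop, so perimeters and profiles do not simply double). The paper handles this by a first-order contradiction argument: a putative decreasing curve $g_t$ produces isoperimetric regions $\Om_{t_n}$ Hausdorff-converging to the round metric ball $\Om_0$, whose lifts therefore split into two isometric pieces of volume fraction $v_0/2$; comparing with L\'evy--Gromov on $(\bS^2,\tilde g_{t_n})$ at volume fraction $v_0/2$ gives the contradiction. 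You would need to supply this Hausdorff-convergence/splitting step to close your converse.
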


We now present the proof of Theorem \ref{thm:dim2}. For the sake of clarity we work in dimension $n$ until the last step of the argument. We also notice that we shall use conformal-criticality only on a sequence of volumes $v_h\to 0^+$, and thus that we end up proving a slightly stronger statement than Theorem \ref{thm:dim2}.

\begin{proof}
  [Proof of Theorem \ref{thm:dim2}] {\it Step one}: We start recalling that since $M$ is compact, by the direct method, for every $v\in(0,1)$ there exists an isoperimetric region $\Om$ with $\V_g(\Om)=v\,\V_g(M)$. By standard density estimates, $\Om$ is an open set of finite perimeter whose topological boundary $\de\Om$ is a closed $(n-1)$-rectifiable set, characterized by the property that $x\in\pa\Om$ if and only if $\V_g(\Om\cap B_r(x))\in(0,\V_g(B_r(x)))$ for every $r>0$. (Here and in the following, $B_r(x)$ stands of course for the geodesic ball of center $x$ and radius $r$ in $M$.) Let us denote by $\Sigma$ the {\it isoperimetric sweep} of $(M,g)$, defined as
  \[
  \Sigma=\bigcup\Big\{\pa\Om:\mbox{$\Om$ is an isoperimetric region in $(M,g)$ for some $v\in(0,1)$}\Big\}\,.
  \]
  In this step we prove that for every $x\in\Sigma$ and every $r>0$ small enough, there exists $u\in C^2_c(B_{2r}(x))$ such that
  \begin{equation}
  \label{x}
  \limsup_{t\to 0^+}\frac{\Le_v(M,\hat{g}^{t,u})-\Le_v(M,g)}t\le -\frac{n-1}{\V_g(M)}\,,
  \end{equation}
  where we have set
  \begin{equation}
    \label{eq:defgtu}
  \hat{g}^{t,u}_{ij}:=e^{2tu} g_{ij}, \qquad |t|<\ve\,.
  \end{equation}
  We first notice that, by the area formula,
  \begin{equation}
  \label{eq:V'A'u}
  \frac{d}{dt}\Big|_{t=0}  \V_{\hat{g}^{t,u}}(\Omega)=n \int_{\Omega} u \, dvol_{g}\,,\qquad \frac{d}{dt}\Big|_{t=0}  \A_{\hat{g}^{t,u}}(\de \Omega)=
 (n-1) \int_{\de \Omega} u \, dvol_{g_{|\de \Omega}}\,,
  \end{equation}
  where $dvol_{g_{|\de \Omega}}$ is the $(n-1)$-dimensional volume form induced by $g$ on $\de\Om$. Similarly, if we let $(\Phi^X_{s})_{s \in (-\ve,\ve)}$ denote the flow with initial velocity given by a smooth vector-field $X$ on $M$, and set $\varphi_{X}:=g(\nu_{\de \Omega}, X)$ (where the inner unit normal $\nu_{\de \Omega}$ to $\Om$ is defined on the reduced boundary of $\Om$, thus $vol_{g_{|\de \Omega}}$-a.e. on $\de\Om$), then, by a classical first variation argument, see \cite[Theorem 17.20]{Mag}, there exists a constant $\lambda\in\R$ such that
  \begin{equation}
  \label{eq:V'A'phi}
  \frac{d}{ds}\Big|_{s=0}  \V_{g}(\Phi^X_{s}(\Omega))= - \int_{\de \Omega} \varphi_{X} \, dvol_{g_{|\de\Omega}}\,,
  \qquad
  \frac{d}{dt}\Big|_{t=0}  \A_{g}(\Phi^X_{s}(\de \Omega))=- \lambda \int_{\de \Omega} \, \varphi_{X}\, dvol_{g_{|\de \Omega}}\,.
  \end{equation}
  (The constant $\lambda$ is the (distributional) mean curvature of $\de \Omega$ computed in the metric $g$ with respect to inner normal $\nu_{\de \Omega}$.)
  The combination of \eqref{eq:V'A'u} and  \eqref{eq:V'A'phi} thus gives
\begin{eqnarray}
\qquad \V_{\hat{g}^{t,u}}(\Phi^X_{s}(\Omega))&=&  \V_{g}(\Omega)+  n t \int_{\Omega} u \, dvol_{g} - s    \int_{\de \Omega} \varphi_{X} \, dvol_{g_{|\de\Omega}} + O(t^{2})+O(s^{2}), \label{eq:Vst} \\
\qquad \A_{\hat{g}^{t,u}}(\Phi^{X}_{s}(\de \Omega))&=&  \A_{g}(\de \Omega)+  (n-1) t \int_{\de \Omega} u \, dvol_{g_{|\de \Omega}} - s \lambda   \int_{\de \Omega} \varphi_{X}  \, dvol_{g_{|\de\Omega}} + O(t^{2})+O(s^{2})\,. \label{eq:Ast}
\end{eqnarray}
Let us now fix $x\in\pa\Om$ for some isoperimetric region $\Om$. For every $r>0$ we can find a smooth vector field $X$ supported in the geodesic ball $B_r(x)$ such that
\[
\int_{\de \Omega} \varphi_{X} \, dvol_{g_{|\de\Omega}}=1\,,
\]
(see \cite[Lemma 17.21]{Mag}). Moreover, if $r$ is small enough, then we can pick $u\in C^2_c(B_{2r}(x))$ such that
\[
\int_{\de \Omega} u \, dvol_{g_{|\de\Omega}}=-1\,,
\qquad \int_{\Omega} u \, dvol_{g}=0\,,\qquad\int_M u\,dvol_{g}=0\,.
\]
Indeed, $B_r(x)\cap\de\Omega$ has positive area, thus there exists $v\in C^0_c(B_r(x)\cap\de\Om)$ with $\int_{\de \Omega} v \, dvol_{g_{|\de\Omega}}<0$. We can thus construct $w_1\in C^2_c(B_r(x))$, $w_2\in C^2_c(\Omega\cap B_{2r}(x)\setminus \overline{B_r(x)})$ and  $w_3\in C^2_c(B_{2r}(x)\setminus\overline{\Om\cup B_r(x)})$ in such a way that
\[
\int_{\de \Omega} w_1 \, dvol_{g_{|\de\Omega}}=-1\,,\qquad\int_M w_2dvol_{g}=-\int_\Om w_1dvol_{g}\qquad\int_Mw_3dvol_{g}=-\int_{M\setminus\Om}w_1dvol_{g}\,,
\]
and then set $u=w_1+w_2+w_3$. We now apply \eqref{eq:Vst} and \eqref{eq:Ast} with these choices of $u$ and $X$, to find
\begin{eqnarray*}
\V_{\hat{g}^{t,u}}(M)&=&  \V_{g}(M)+ O(t^{2})+O(s^{2})\,,
\\
\V_{\hat{g}^{t,u}}(\Phi^X_{s}(\Omega))&=&\V_{g}(\Omega)- s+ O(t^{2})+O(s^{2})\,,
\\
\A_{\hat{g}^{t,u}}(\Phi^{X}_{s}(\de \Omega))&=&  \A_{g}(\de \Omega)- (n-1) t  - s \lambda   + O(t^{2})+O(s^{2})\,.
\end{eqnarray*}
Let us consider the function $F\in C^2((-\ve,\ve)\times(-\ve,\ve))$ defined by
\[
F(s,t)=\frac{\V_{\hat{g}^{t,u}}(\Phi^X_{s}(\Omega))}{\V_{\hat{g}^{t,u}}(M)}\qquad |t|,|s|<\ve\,.
\]
Since $F(0,0)=v$ and $\pa F/\pa s(0,0)=-1/\V_g(M)$, up to decrease the value of $\ve$, there exists a $C^2$-function $s=s(t)$ such that $F(s(t),t)=v$ for every $|t|<\ve$, i.e.
\[
\frac{\V_{\hat{g}^{t,u}}(\Phi^X_{s(t)}(\Omega))}{\V_{\hat{g}^{t,u}}(M)}=v\qquad\forall |t|<\ve\,.
\]
Moreover, $\pa F/\pa t(0,0)=0$ implies $s'(0)=0$, and thus $s(t)=O(t^2)$. Hence,
\[
\frac{\A_{\hat{g}^{t,u}}(\Phi^{X}_{s(t)}(\de \Omega))}{\V_{\hat{g}^{t,u}}(M)}=
\frac{\A_{g}(\de \Omega)}{\V_g(M)}- \frac{(n-1)}{\V_g(M)}\, t + O(t^{2})\,,\qquad\mbox{as $t\to 0$}\,,
\]
so that
\[
\cI_{(M,\hat{g}^{t,u})}(v)\le \cI_{(M,g)}(v)- \frac{(n-1)}{\V_g(M)}\, t + O(t^{2})\qquad\mbox{as $t\to 0$}\,,
\]
and \eqref{x} is proved.

\medskip

\noindent {\it Step two}: Now assuming that $g$ is a conformally critical isoperimetric metric with constant $K>0$, we show that for every $x\in\Sigma$ (the isoperimetric sweep of $M$), there exists $\xi\in T_xM$ such that
\[
\Ric_{g,x}(\xi,\xi)=K\,g_x(\xi,\xi)\,.
\]
Indeed, if this is not the case, then we can find an isoperimetric region $\Om$ and $x\in\pa\Om$ such that
\begin{equation}
  \label{z}
  \Ric_{g,y}(\xi,\xi)>K\,g_y(\xi,\xi)\,,\qquad\forall \xi\in T_yM, \, y\in B_{2r}(x)\,.
\end{equation}
Depending on $x$ and $\Om$, we pick $r$, $X$ and $u$ as in step one. Recall that, in step one, we constructed $u$ so that it was supported in $B_{2r}(x)$. Therefore, by \eqref{z}, we can entail that for every $|t|<\ve$
\[
\Ric_{\hat{g}^{t,u}}\ge K\,\hat{g}^{t,u}\qquad\mbox{on }M\,.
\]
By definition of conformally-critical isoperimetric metric we find a contradiction with \eqref{x}.

\medskip

\noindent {\it Step three}: We now let $n=2$. By step two, $K_g\equiv K$ on the closure of the isoperimetric sweep of $M$. However, it is well-known (see for example \cite{MorgJoh,NardAGA,Druet,MoNa}) that if $\{\Om_h\}_{h\in\N}$ is a sequence of isoperimetric regions corresponding to volume fractions $v_h\to 0^+$ as $h\to\infty$, then $\{\Om_h\}_{h\in\N}$ converges in Hausdorff distance to a point $x$ such that
\[
K_g(x)=\max_M\,K_g\,.
\]
By continuity of $K_g$ we thus conclude that $K=\max_M K_g$, and thus $K_g$ is constantly equal to $K$ on $M$. We have thus proved that if $g$ is conformally-critical with constant $K$, then $K_g\equiv K$, and thus, since $K>0$, that either $(M,g)$ is the sphere or the real projective plane.

\medskip

\noindent {\it Step four}: We are now left to show that both the sphere and the real projective plane are conformally-critical. In the case of the sphere this is immediate from the Levy-Gromov inequality, so that we are left to check the case of the real projective plane.

Without loss of generality we consider the standard projective plane $\RP^{2}$ endowed with the metric $g_{0}$ of constant curvature $K=1$ defined as the quotient of the round sphere $(\bS^{2}, \tilde{g}_{0})$ of unit radius in $\R^{3}$ under the antipodal equivalence relation. We denote by $\Pi: \bS^{2} \to \RP^{2}$ the projection map.

Let us first recall that on a general compact Riemannian surface $(M^{2},g)$ without boundary,  just by considering the complement of each competitor, the isoperimetric profile $\cI_{(M^{2}, g)}$ is symmetric with respect to $v=1/2$. In particular, we shall restrict $v$ to the range $v \in (0,1/2]$.
Moreover, by direct methods and first variation arguments, for every $v$ there exist isoperimetric regions which are necessarily bounded by finitely many curves with constant geodesic curvature.

By direct computation, in the case when $(M^2,g)=(\RP^{2}, g_{0})$ and $v\in [0,1/2]$, isoperimetric regions are metric balls which lift into $\bS^{2}$ as pairs of antipodal spherical cups, each spherical cup having volume $2 v \pi$ in $\bS^2$.

Now assume by contradiction that there exists  $v_{0}\in (0, 1/2]$ and a curve $g_{(\cdot)}: [0,1] \to \cM_{\RP^{2}, 1}$ starting from the round metric $g_{0}$ on $\RP^{2}$, such that
\begin{equation*}
\limsup_{t \to 0^+} \frac{\Le_{v_{0}} (\RP^{2}, g_{t})- \Le_{v_{0}} (\RP^{2}, g_{0}) } {t} <0.
\end{equation*}
Thus we can find $t_n\to 0^+$ as $n\to\infty$ and isoperimetric regions $\Omega_{t_{n}}$ in  $(\RP^{2}, g_{t_n})$  such that
\begin{equation}\label{eq:contrRP2Om}
\limsup_{n\to\infty} \frac{1}{t_n} \left[ \frac{ \A_{g_{t_n}} ( \de \Omega_{t_n})} {\V_{g_{t_n}} (\RP^{2})} -  \frac{\A_{g_{0}} ( \de \Omega_{0})} {2\pi}   \right] < 0, \quad  \frac{\V_{g_{t_n}} (\Omega_{t_n})}{\V_{g_{t}} (\RP^{2})}=v_{0}\qquad\forall n\in\N\,,
\end{equation}
where $\Omega_{0}\subset \RP^{2}$ is a metric ball in metric $g_{0}$ with $\V_{g_{0}}(\Omega_{0})= 2 v_{0} \pi$. Up to extracting a subsequence and up to translations, by standard density estimates, one can assume that $\Omega_{t_{n}}$ converges to $\Omega_{0}$ in Hausdorff distance with respect to the metric $g_{0}$. (In fact, the convergence is smooth, but this is not needed here.)

Consider now the lifted metrics on $\bS^{2}$ defined by $\tilde{g}_{t}:=\Pi^{*}(g_{t})$ and observe that  $\tilde{g}_{t}\in \cM_{\bS^{2}, 1}$, as  $\tilde{g}_{t}$ is locally isometric to $g_{t}$. Moreover, by construction, $\tilde{g}_{t}$ is invariant under the antipodal map and $\V_{\tilde{g}_{t}} (\bS^{2})=2 \V_{{g}_{t}} (\RP^{2})$. Since $v_{0} \in (0, 1/2]$ and $\Omega_{t_{n}}$ is Hausdorff close to $\Om_0$, the lifted set $\tilde{\Omega}_{t_{n}}:=\Pi^{-1}(\Omega_{t_{n}}) \subset \bS^{2}$ can be written as  $
\tilde{\Omega}_{t_{n}}=\tilde{\Omega}_{t_{n}}^{1}\cup  \tilde{\Omega}_{t_{n}}^{2}$ where  $\tilde{\Omega}_{t_{n}}^{1}, \tilde{\Omega}_{t_{n}}^{2} \subset \bS^{2}$ are $\tilde{g}_{t}$-isometric sets at positive Hausdorff distance. In particular
\begin{equation}\label{eq:AVtOm}
\frac{\V_{\tilde{g}_{t_{n}}} (\tilde{\Omega}_{t_{n}}^{1})}{ \V_{\tilde{g}_{t_{n}}} (\bS^{2})}= \frac{\V_{\tilde{g}_{t_{n}}} (\tilde{\Omega}_{t_{n}}^{2})}{ \V_{\tilde{g}_{t_{n}}} (\bS^{2})} = \frac{1}{2} \frac{\V_{{g}_{t_{n}}} ({\Omega}_{t_{n}})}  { \V_{{g}_{t_{n}}} (\RP^{2})}=\frac{v_{0}}{2},  \quad \A_{\tilde{g}_{t_{n}}} (\de \tilde{\Omega}_{t_{n}}^{1})= \A_{\tilde{g}_{t_{n}}} (\de \tilde{\Omega}_{t_{n}}^{2})=\A_{{g}_{t_{n}}} ({ \de \Omega}_{t_{n}}).
\end{equation}
Notice that for $t=0$ one has that $\tilde{\Omega}_{0}:= \Pi^{-1}(\Omega_{0})$ can be written as $\tilde{\Omega}_{0}=\tilde{\Omega}_{0}^{1}\cup \tilde{\Omega}_{0}^{2}$, where  $\tilde{\Omega}_{0}^{1}$ and $\tilde{\Omega}_{0}^{2}$ are antipodal  spherical caps with $\V_{\tilde{g}_{0}}(\tilde{\Omega}_{0}^{1})= \V_{\tilde{g}_{0}}(\tilde{\Omega}_{0}^{2})=2 \pi v_{0}$. Note that such spherical caps are disjoint and isoperimetric for their own volume in $(\bS^{2}, \tilde{g}_{0})$.

The combination of  \eqref{eq:contrRP2Om} and \eqref{eq:AVtOm} then yields
\begin{align*}
\liminf_{t \to 0^+} \frac{\Le_{v_{0}/2} (\bS^{2}, \tilde{g}_{t})- \Le_{v_{0}/2} (\bS^{2}, \tilde{g}_{0}) } {t} & \leq    \limsup_{n \to \infty} \frac{1}{t_{n}} \left[ \frac{ \A_{\tilde{g}_{t_{n}}} ( \de \tilde{\Omega}_{t_{n}}^{1} )} {\V_{\tilde{g}_{t_{n}}} (\bS^{2})} -  \frac{\A_{\tilde{g}_{0}} ( \de \tilde{\Omega}_{0}^{1})} {4\pi}   \right] \nonumber \\
& = \frac{1}{2}  \limsup_{n \to \infty}  \frac{1}{t_{n}} \left[ \frac{ \A_{g_{t_{n}}} ( \de \Omega_{t_{n}})} {\V_{g_{t_{n}}} (\RP^{2})} -  \frac{\A_{g_{0}} ( \de \Omega_{0})} {2\pi}   \right] < 0,
\end{align*}
contradicting the classical Levy-Gromov inequality for $v=v_{0}/2$ and $K=1$. The proof of step four and then of Theorem \ref{thm:dim2} is thus complete.
\end{proof}

\begin{remark}
  {\rm The above argument actually shows more than what is claimed in Theorem \ref{thm:dim2}, and namely that, if $n=2$ and $g$ is conformally-critical for $\Le_v$ {\it just} for a sequence of values $v=v_h\to 0^+$ as $h\to\infty$, then $K_g$ is constant. Similarly, in step four, we have proved that $(\RP^2,g_0)$ is a critical, and not just conformally critical.}
\end{remark}

\section*{Acknowledgements} This work has been supported by the NSF DMS Grant No. 1265910. Part of the work has been developed while A. M. was in residence at the Mathematical Science Research Institute in Berkeley, California, during Spring 2016, supported by NSF DMS Grant No. 1440140.

\end{document}